\numberwithin{equation}{section}
\numberwithin{figure}{section}
\newtheorem{theorem}{Theorem}[section]
\newtheorem{corollary}[theorem]{Corollary}
\newtheorem*{thmA}{Theorem \ref{thmA}}
\newtheorem{proposition}[theorem]{Proposition}
\newtheorem{lemma}[theorem]{Lemma}
\newtheorem{lem}[theorem]{}
\theoremstyle{definition}
\newtheorem{definition}[theorem]{Definition}
\theoremstyle{remark}
\newtheorem{remark}[theorem]{Remark}
\newtheorem{example}[theorem]{Example}
\newcommand{\blem}{\begin{lem} \rm}
\newcommand{\elem}{\end{lem}}
\newcommand\A{\mathcal{A}}
\newcommand{\F}{\mathcal{F}}
\newcommand{\R}{\mathbb
{R}}
\newcommand{\br}{\mathbf{r}}
\newcommand{\bz}{\mathbf{z}}
\newcommand{\supp}{\on{supp}}
\newcommand{\C}{\mathbb{C}}
\newcommand{\Z}{\mathbb{Z}}
\newcommand{\on}{\operatorname}
\newcommand\bra[1]{ < \kern-.7ex {#1} \kern-.7ex >} % Categorical quotients
\newcommand{\ssm}{\kern-.5ex \smallsetminus \kern-.5ex}
\newcommand\dirac{/\kern-1.2ex\partial} % Dirac operator
\newcommand\qu{/\kern-.7ex/} % Categorical quotients
\newcommand\lqu{\backslash \kern-.7ex \backslash} % Categorical
\newcommand\dr{r_+ \kern-.7ex - \kern-.7ex r_-}
\newcommand{\labell}\label
\newcommand{\ol}{\overline}
\newcommand\bdefn{\begin{definition}}
\newcommand\edefn{\end{definition}}
\newcommand\bea{\begin{eqnarray*}}
\newcommand\eea{\end{eqnarray*}}
\newcommand\bcv{\left[ \begin{array}{r} }
\newcommand\ecv{\end{array} \right] }
\newcommand\bma{\left[ \begin{array} }
\newcommand\ema{\end{array} \right]}
\newcommand\ben{\begin{enumerate}}
\newcommand\een{\end{enumerate}}
\newcommand\bex{\begin{example}}
\newcommand\bsj{\left\{ \begin{array}{rrr} }
\newcommand\esj{\end{array} \right\}}
\newcommand\eex{\end{example}}
\newcommand\Ob{{\on{Ob}\ }}
\newcommand\sx{*\kern-.5ex_X}
\newcommand{\ainfty}{{$A_\infty$\ }}
\def\mathunderaccent#1{\let\theaccent#1\mathpalette\putaccentunder}
\def\putaccentunder#1#2{\oalign{$#1#2$\crcr\hidewidth \vbox
to.2ex{\hbox{$#1\theaccent{}$}\vss}\hidewidth}}
\newcommand{\bfs}{\mathbf{s}}
\newcommand{\bx}{\mathbf{x}}
\newcommand{\bfa}{\mathbf{a}}
\newcommand{\bfe}{\mathbf{e}}
\newcommand{\bw}{\mathbf{w}}
\newcommand{\B}{\mathcal{B}}
\begin{document}

\title{Quilted strips, graph associahedra, and \ainfty $n$-modules}
\author{Sikimeti Ma'u}
\address{Mathematical Sciences Research Institute,\\ 17 Gauss Way,\\ Berkeley, CA 94720-5070}
\email{sikimeti@msri.org}
\keywords{graph associahedra, associahedra, permutahedra, A-infinity, modules, bimodules, toric varieties, moment map}
\maketitle

\begin{abstract} We consider moduli spaces of {quilted strips} with markings and their compactifications.   Using the theory of moment maps of toric varieties we identify the compactified moduli spaces with certain graph associahedra.  We demonstrate how these moduli spaces govern the combinatorics of \ainfty $n$-modules, which are natural generalizations of \ainfty modules ($n=1$) and bimodules ($n=2$).
\end{abstract}  

\section{Introduction}

The combinatorics of compactified moduli spaces of Riemann surfaces are often responsible for rich algebraic structures in pseudoholomorphic curve theory.  The prototypical example is Fukaya's \ainfty category in symplectic topology, where the \ainfty algebraic relations come from the fact that moduli spaces of disks with markings realize the Stasheff polytopes (or {\em associahedra}), whose combinatorics govern the \ainfty associativity relations.   

In this note we describe the structure of moduli spaces of {\em quilted lines} with markings, whose natural Grothendieck-Knudsen compactifications realize certain graph associahedra.  The graph associahedra are generalizations\footnote{Here the reader should perhaps be warned that they are not the same as the {\em generalized associahedra} of Fomin and Zelevinsky.} of associahedra introduced by Carr and Devadoss in \cite{cardev}.  The main purpose of this note is to prove:    
\begin{theorem}\label{thmA}
The compactified moduli space of stable $n$-quilted lines with markings is a smooth manifold-with-corners, homeomorphic to the graph associahedron of its dual graph. 
\end{theorem}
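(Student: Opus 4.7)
The plan is to realize the compactified moduli space and the graph associahedron as moment polytopes of the same smooth projective toric variety, and then identify their face stratifications. First, I would introduce ``gap coordinates'' on the open moduli space: for each pair of adjacent markings on the same seam or boundary component, record their distance. After quotienting by the overall translation (and dilation, when available), the open moduli of stable $n$-quilted lines becomes diffeomorphic to the interior of a convex polytope, with some gaps bounded to zero and others allowed to escape to infinity. The Grothendieck--Knudsen compactification fills in the boundary by allowing prescribed collections of gaps to degenerate (vanish or blow up) in a controlled fashion indexed by bubble trees.

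On the combinatorial side, I would invoke the Minkowski-sum realization of the graph associahedron $P_G$ of the dual graph $G$: for each tube $T$ of $G$ one takes the standard simplex $\Delta_T$ spanned by the vertices of $T$, and $P_G := \sum_T \Delta_T$. The polytope $P_G$ is simple with a unimodular normal fan, so it is the moment polytope of a smooth projective toric variety $X_G$. Its face lattice is the poset of nested (= Carr--Devadoss compatible) tubings of $G$, and each codimension-$k$ face factors as a product of lower-dimensional graph associahedra corresponding to the subgraphs cut out by its tubes.

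Next I would exhibit the bijection between boundary strata of the compactification and nested tubings of $G$. A codimension-$k$ stratum is indexed by a stable $n$-quilted tree with $k$ nodes, obtained from simultaneous collisions of disjoint or nested subsets of markings; each colliding subset picks out a connected subgraph of $G$, and the disjointness/nesting conditions translate verbatim into the compatibility of tubes. This produces a continuous, face-preserving map from the compactified moduli space to $P_G$, which on each open stratum I would verify to be a diffeomorphism by comparing gap coordinates to affine toric coordinates at the corresponding cone of the normal fan. The product decomposition of faces of $P_G$ matches the bubble-tree decomposition of strata of the moduli space, so the map is bijective.

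The main obstacle is establishing smoothness (not merely continuity) of the resulting chart structure at the deepest corners, where all admissible collisions have occurred simultaneously. Here one needs a compatible comparison between the iterative plumbing coordinates used to glue bubble trees in the moduli space (ratios of successive gaps across bubble levels) and the affine-monomial coordinates on $X_G$ near a vertex. The essential input is the unimodularity of each maximal cone of the normal fan of $P_G$ --- equivalently, the simplicity of the graph associahedron --- which guarantees that the change of variables is a smooth diffeomorphism of orthants; this is precisely where the toric framework replaces what would otherwise be a delicate inductive gluing argument.
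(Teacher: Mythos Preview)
Your strategy is the paper's: embed the open moduli into the torus of a projective toric variety whose moment polytope is the graph associahedron, extend to the compactification, and invoke the moment-map homeomorphism from the nonnegative real part to the polytope. The implementation differs. The paper uses Devadoss's convex-hull realization (integer weight vectors determined by the rule $\sum_{v\in T}\bw(v)=3^{|T|-2}$) and takes the toric variety to be the closure of the explicit monomial image $\bx\mapsto(\bx^{\bw_1}:\cdots:\bx^{\bw_N})$; you propose the Minkowski-sum realization $\sum_T\Delta_T$ and the abstract toric variety of its normal fan. Both polytopes have the same normal fan (the nested-set fan), so either route is viable. Your route, leaning on the unimodularity of that fan, would give a smooth $X_G$; by contrast the paper's monomial embedding yields a generally non-normal variety, and its key lemma shows only that each affine-chart coordinate equals $\phi_i^{m_i}$ with $m_i\geq 1$ (not necessarily $1$), so one gets a homeomorphism via $t\mapsto t^m$ on $[0,\infty)$ rather than a diffeomorphism of charts.

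The soft spot in your outline is the sentence ``this produces a continuous, face-preserving map.'' A bijection of face posets does not by itself produce a map on points. You must first identify $Q(n,\br)$ with the open torus orbit (this is immediate: the gap coordinates modulo dilation \emph{are} the torus), and then verify that this identification extends across each boundary stratum---concretely, that at a maximal tubing $\mathcal{T}$ the ratio coordinates $\phi_T=x_{\mathrm{in}}(T)/x_{\mathrm{out}}(T)$ coincide with the affine toric coordinates dual to the primitive ray generators $e_T=\sum_{v\in T}e_v$ of the corresponding maximal cone. That verification is precisely the content of the paper's key lemma; in your Minkowski-sum setting it is cleaner (unimodularity forces all the exponents to be $1$), but it still has to be carried out and cannot be replaced by the word ``unimodular'' alone.
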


The codimension one facets of the compactified moduli spaces govern the combinatorics of \ainfty-algebraic structures that we call $n$-modules (see Section 4).   They are direct generalizations of \ainfty modules ($n=1$) and bimodules ($n=2$), but do not seem to have appeared in the literature as yet.  Both the \ainfty module and bimodule equations are essentially reformulations of the \ainfty associativity relations; from the point of view of the moduli spaces of quilted lines, this reflects the fact that the moduli spaces of quilted lines for $n =1$ and $n=2$ all yield classical associahedra.  However for $n \geq 3$, the moduli spaces yield more general graph associahedra.  

Quilted lines belong to a class of generalized Riemann surfaces called {\em quilts}, which are essentially  Riemann surfaces decorated with certain real-analytic submanifolds, called {\em seams}.   Quilts are domains for a generalized pseudoholomorphic curve theory developed by Wehrheim and Woodward \cite{ww2,ww3}, in which the seams of a quilt are given boundary conditions in Lagrangian correspondences.  Applications of these moduli spaces of quilted lines to Lagrangian Floer theory will be treated in a separate paper.  It is also worth mentioning that these results are closely related to those of \cite{multiplihedra}, which dealt with moduli spaces of quilted disks (which are connected to \ainfty functors), and the method of proof is very similar.   

Interestingly, the graph associahedra appearing in this note also appear in the work of Bloom on a spectral sequence for links in monopole Floer homology \cite{bloom}.  These polytopes should be thought of as parametrizing a mixture of homotopy associativity and homotopy commutativity: they interpolate between associahedra, which parametrize homotopy associativity, and permutahedra, which parametrize homotopy commutativity.  

The material is organized as follows.  Section 2 defines moduli spaces of quilted lines with markings, and their compactifications.  Section 3 proves the correspondence between the compactified moduli spaces and certain graph associahedra.  Section 4 introduces the connection between the codimension one facets of these polytopes and the algebraic structure of \ainfty $n$-modules, describing some of the properties of $n$-modules which are analogous to well-known properties of \ainfty modules and bimodules.  \\
\\
\noindent {\it Acknowledgements.} This work was carried out while the author was a postdoctoral fellow at MSRI, during the 2009-2010 Program in Symplectic and Contact Geometry and Topology.  The author thanks MSRI and the program organizers for the stimulating and hospitable environment.  Thanks also to Satyan Devadoss for useful discussions about graph associahedra. 

\section{Moduli spaces of marked quilted lines}

Let $\C$ denote the complex plane, with complex coordinate $z = x+iy$.

\begin{definition}  Fix $n \geq 1$, and fix $-\infty < x_1 < x_2 < \ldots < x_n < \infty$.   An {\em $n$-quilted line} $Q$ consists of the $n$ parallel lines $l_1,\ldots,l_n$, where $l_j$ is the vertical line $\{ x_j + i \R \}$, considered as a subset of $\C$.   Let $\br = (r_1, \ldots, r_n) \in \Z^{n+1}_{\geq 0}$.  An {\em $n$-quilted line with $\br$ markings} consists of data $(Q, \bz_0, \ldots, \bz_n)$ where $Q_n$ is a quilted line, and each vector $\bz_j = (z_j^1, z_j^2, \ldots, z_{j}^{r_j})$ is an upwardly ordered configuration of points in $l_j$, i.e., $z_{j}^{k} = x_j + i y_{j}^{k}$ with $y_{j}^{0} < y_{j}^1 < \ldots < y_{j}^{r_j}$.    There is a free and proper $\R$ action on the space given by simultaneous translation in the $y$ direction.  The {\em moduli space of $n$-quilted, $\br$-marked lines}, $Q(n,\br)$, is the set of such marked quilted lines, modulo translation.     
\end{definition}

\subsection{Alternative picture} When $n\geq 2$, the whole strip can be holomorphically mapped to a disk with two distinguished boundary markings representing the $+\infty$ and $-\infty$ ends of the strip, and arcs connecting the two markings.  The distances between the lines are reflected in the angles at which the arcs meet.  (When $n=1$, one may add a second line with no markings on it to get a strip which can then be identified with a disk.) Thus, $Q(n, \br)$ is also the moduli space of marked disks with two distinguished points, and arcs of fixed angles connecting the two points carrying markings on the arcs, modulo complex automorphisms of the disk.  Very similarly, $Q(n,\br)$ can be realized as a moduli space of $n$ rays starting at $0$ in the upper half-plane, with marked points on the rays, modulo dilations; this realization of the moduli space is the one we will use the most in this note.  Three views of a marked quilted line are illustrated in Figure \ref{qstripfig}.

\begin{figure}[h]
\includegraphics[width=5in]{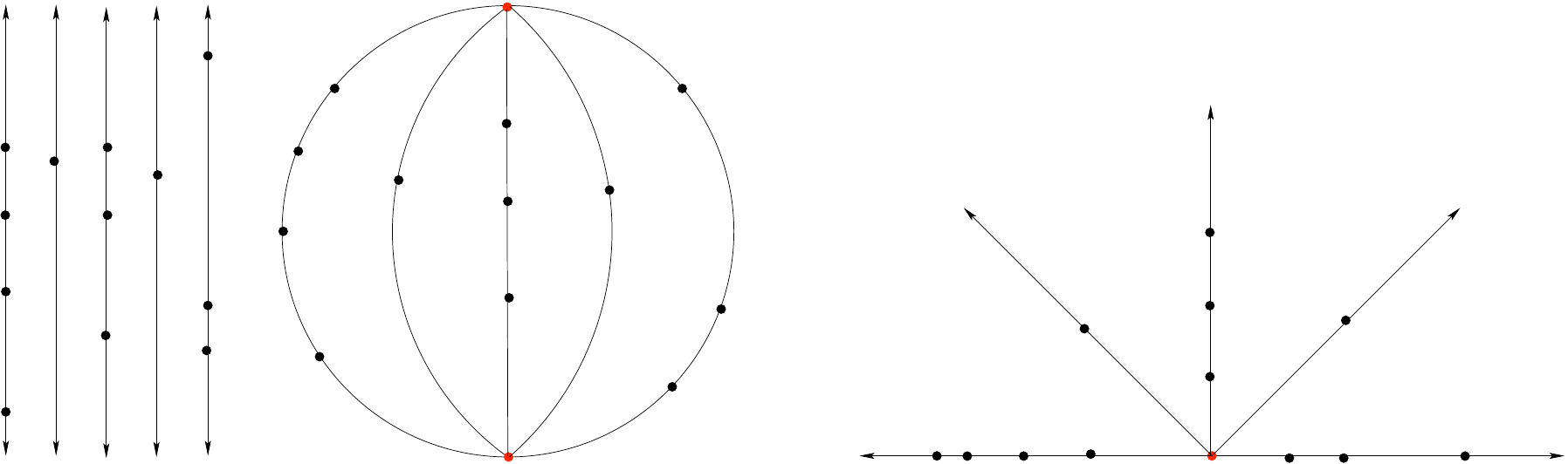}
\caption{An element of $Q(5,(4,1,3,1,3))$ viewed as {\it (left)} a quilted line, modulo translations; {\it (center)} a disk with arcs, modulo complex automorphisms; {\it (right)} rays from an origin, modulo dilations.}\label{qstripfig}
\end{figure}

\subsection{Bubble trees} A marked $n$-quilted line is {\em stable} provided that it has at least one marking.  The natural Grothendieck-Knudsen type compactification $\ol{Q}(n,\br)$ is obtained by allowing the vertical distances between markings to go to $\infty$, and markings on the same line to bubble.   More precisely,
we compactify with {\em stable, nodal $n$-quilted lines}, which are the relevant ``bubble trees'', whose  combinatorial types have the following description. 
\begin{definition}
Let $n \geq 1$, $\br = (r_1, \ldots, r_n) \in \Z_{\geq 0}^n$.  An {\em $n$-paged ribbon tree with $\br $  leaves} consists of:
\begin{itemize}
\item A set of vertices $V = V_{spine} \sqcup V_1 \sqcup \ldots \sqcup V_n$ with $|V_{spine}| \geq 1$;  

\item A set of finite edges $E = E_{spine} \sqcup E_1 \sqcup \ldots \sqcup E_n$;

\item A set of semi-infinite edges, $E^{\infty}_{spine} \cup E^{\infty}_1 \cup \ldots E^{\infty}_n$ where $E^{\infty}_{spine}= \{e_{+\infty}, e_{-\infty}\}$, and $E^{\infty}_{i} = \{ e_{i, 1}, \ldots, e_{i,r_i}\}$ for $i = 1$ to $n$.   

\item A ribbon structure\footnote{A ribbon structure is a cyclic ordering of the edges at each vertex. A ribbon structure on a tree determines, up to planar isotopy, a planar embedding of the tree, and therefore a cyclic ordering of the semi-infinite edges.}  for each subtree $T_i = (V_{spine}\cup_i V_i \cup_i V_i^\infty, E_{spine}\cup_i E_i \cup_i E_i^\infty )$, for $i = 1$ to $n$.
\end{itemize}
 
The data are required to satisfy the following conditions.  
 
\begin{enumerate}
\item The subgraph $(V_{spine}, E_{spine} \cup \{e_{+\infty}, e_{-\infty}\})$ is a path with $e_{+\infty}$ at one end and $e_{-\infty}$ at the other; we call this path the {\em spine}.  
 
\item For each $i$, the planar embedding of the subtree $T_i = (V_{spine}\cup V_i, E_{spine}\cup E_i \cup \{e_{+\infty}, e_{-\infty}\} \cup \{e_{i_1}, \ldots, e_{i,r_i}\}$ determined by the ribbon structure induces the cyclic order $e_{+\infty}, e_{i,1}, \ldots, e_{i, r_i}, e_{-\infty}$ on the semi-infinite edges.  
 
\item When $i \neq j$, there are no edges between $V_i$ and $V_j$. 
 
\item  Each vertex in the tree is adjacent to at least 3 edges (the {\em stability} condition).
 \end{enumerate}

The planar subtree $T_i = (V_{spine}\cup V_i,  E_{spine} \cup E_i \cup E_i^{\infty})$ is called the $i$-th {\em page} of the tree.  
\end{definition}

\begin{example}
See Figure \ref{egP}.
\end{example}

\begin{figure}[h]
\input{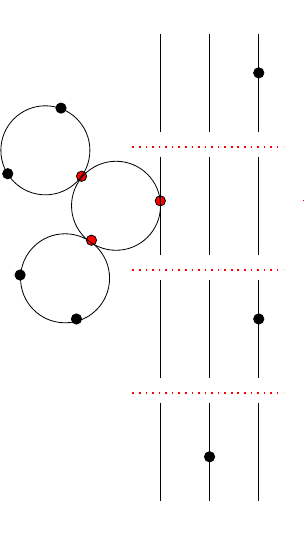_t}\ \ \ \ \ \ \ \ \ \ \includegraphics[height=2in]{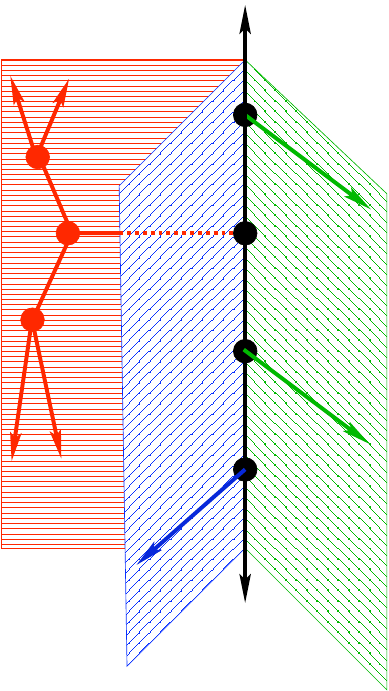}
\caption{A stable, nodal 3-quilted line in $\ol{Q}(3,(4,1,2))$ with its combinatorial 
type, a 3-paged ribbon tree.}\label{egP}
\end{figure}%\marginpar{egP}

\begin{definition}
Let $T$ be an $n$-paged ribbon tree with $\br = (r_1,\ldots,r_n)$ leaves.  A {\em stable, nodal $n$-quilted line with $\br $ markings} of combinatorial type $T$ is a tuple $( \{Q_v\}_{v\in V}, \{z_{v,e} \}_{v\in V, e \in E, v \in e}, \{z_{i,j}\}_{i = 1,\ldots,n, j=1,\ldots,r_i})$, where $Q_v$ are components of the bubble tree, $z_{v,e} \in Q_v$ are nodal points, and $z_{i,j}$ are markings.  
If $v \in V_{spine}$, the component $Q_v$ is an $n$-quilted line, otherwise, $v$ belongs to some page, say the $i$-th page, and $Q_v$ is a disk component whose boundary is connected to the $i$-th line.  For each finite edge $e = (v,w) \in E$, $z_{v,e} \in Q_v$ is a nodal point on the component $Q_v$; if $v$ is on the $i$-th page, the nodal point $z_{v,e}$ is connected to the $i$-th line. Each $z_{i,j}$ is a marked point on the component indexed by the vertex in $T$ that is adjacent to the leaf $e_{i,j}$, and lies on the $i$-th line.  
\end{definition}

\subsection{Equivalence} Two stable, nodal $n$-quilted lines with $\br$ markings of the same combinatorial type $T$ are {\em isomorphic} if there is a tuple $(\phi_v)_{v\in V}$ of reparametrizations $\phi_v: Q_v \to Q^\prime_v$ such that $\phi_v(z_{v,e}) = z^\prime_{v,e}$ for all $E \in E(T)$, and $\phi_v(z_{i,j}) = z^\prime_{i,j}$ for all $i = 1,\ldots,n$ and $j=1,\ldots,r_i$.  Write $Q_T$ for the set of isomorphism classes of nodal quilted lines with combinatorial type $T$.  

\subsection{Compactification} As a set, the {\em compactified moduli space of $n$-quilted lines with $\br$ markings } is the union  
\[
\ol{Q}(n,\br) := \bigcup_{T} Q_T
\]
where $T$ ranges over all $n$-paged trees with $\br$ leaves.

\subsection{Charts}
In this section we will define local charts on $\ol{Q}(n,\br)$ using coordinates that are closely related to cross-ratios.  First we fix some notation.  Identify the $n$-quilted line with a collection of $n$ rays from the origin, with marked points on the rays, and with the reparametrization group acting by dilations. 

\begin{definition} \label{coord}%\marginpar{coord}
The $r_i$ markings on the $i$-th ray are parametrized by homogeneous coordinates $(x_{i,1}:x_{i,2}:\ldots:x_{i,r_i})$ where $x_{i,1}$ is the distance between the first marking and $0$, and $x_{i,k}$ is the distance along the ray between the $k$-th marking and the $(k-1)$-st marking (see Figure \ref{homcoord}).  
\end{definition}

\begin{figure}[h]
\input{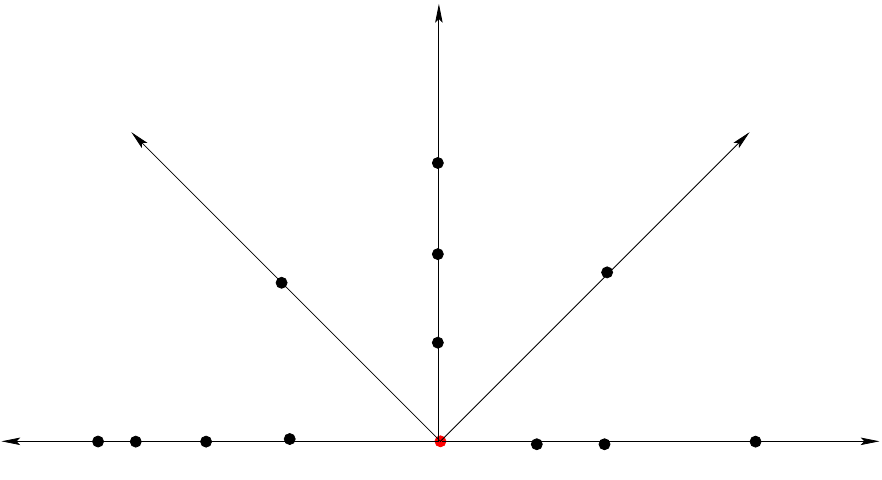_t}
\caption{Homogeneous coordinates for $Q(5,(4,1,3,1,3))$.}\label{homcoord}
\end{figure}%\marginpar{homcoord}

Consider a maximal combinatorial type $T$, i.e., a zero-dimensional facet of $\ol{Q}(n,\br)$.  We associate a chart $\phi_T: [0,\infty)^{|E|} \to \ol{Q}(n,\br)$ to $T$ by assigning a ratio to each node (indexed by finite edges in $E$) as follows.  On each component $Q_v$ there is a unique special point which is closest to the marked point $+\infty$, and once that point is mapped to $+\infty$ there is a {\em unique} variable $x_{i,j}$ which is finite and non-zero on $Q_v$.  Consider a node between components $Q^+$ and $Q^-$, where $Q^+$ is the component closer to the distinguished marking $+\infty$.  Let $x_{i,j}$ be the variable on $Q^+$ that is finite and non-zero, and let $x_{k,l}$ be the variable on $Q^-$ that is finite and non-zero.  Assign the ratio $x_{k,l}/x_{i,j} \in [0,\infty) $ to that node, where the node corresponds to the ratio being $0$.  

\begin{example} Figure \ref{fig:egvert} illustrates the ratio chart for a maximal combinatorial type. 
\end{example}
%\begin{figure}[h]
%\input{node1.pdf_t}
%\caption{Nodes indicated in red.  Assigned coordinates are, from left to right: $x_{k,l}/x_{i,j}$, $x_{k,l}/x_{k,j}$, $x_{k,l}/x_{k,m}$ and $x_{k,l}/x_{k,m}$.}\label{fig:node1}
%\end{figure}%\marginpar{fig:node1}

\begin{figure}
\input{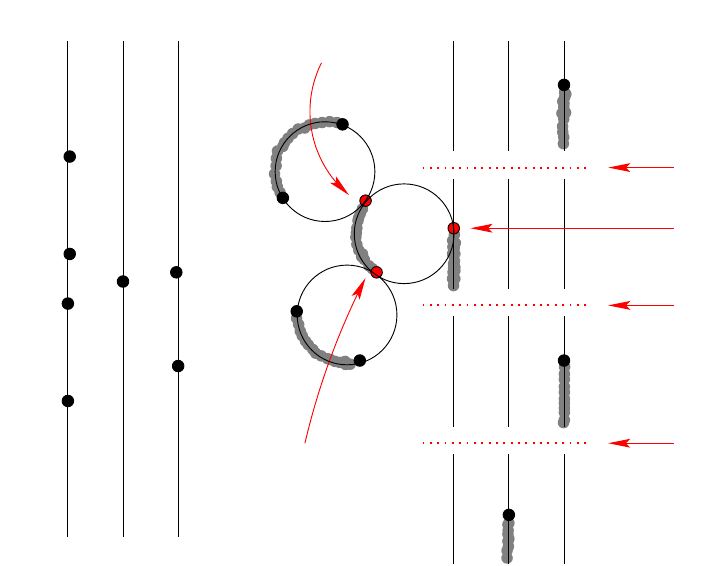_t }
\caption{The ratio chart for a neighborhood of a vertex in $\ol{Q}(3,(4,1,2))$.  Each highlighted edge
corresponds to the unique variable $x_{i,j}$ which is finite and non-zero on that component.} 
\label{fig:egvert}
\end{figure}%\marginpar{fig:egvert}

\begin{proposition} The charts cover $\ol{Q}(n,\br)$, and the transition functions are smooth on the intersections of the charts.  They give $\ol{Q}(n,\br)$ the topology of a smooth manifold-with-corners.  
\end{proposition}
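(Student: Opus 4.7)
The plan is to verify three things: (i) each chart $\phi_T$ is a well-defined homeomorphism onto its image; (ii) the images of the $\phi_T$ cover $\ol{Q}(n,\br)$; and (iii) the change-of-coordinates maps $\phi_T^{-1}\circ\phi_{T'}$ are smooth on overlaps and respect the corner strata. A preliminary combinatorial count, using that every internal vertex of a maximal paged ribbon tree has exactly $3$ adjacent edges, shows $|E(T)|=\dim Q(n,\br)$, so the chart domain has the expected dimension.

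For (i), given $(\epsilon_e)\in[0,\infty)^{|E(T)|}$, I will reconstruct the quilted line by fixing the distinguished variable $x_{i_0,j_0}$ on an arbitrary starting component equal to $1$ and propagating outward along edges of $T$: the ratio labelling an edge determines the distinguished variable on the adjacent component in terms of the one already fixed. Over the open stratum $(0,\infty)^{|E(T)|}$ this uniquely reconstructs a point of $Q(n,\br)$ (the initial normalization being absorbed by the dilation action), while on the locus where some $\epsilon_e=0$ the two components on either side of $e$ remain separated as a node, yielding the stable nodal quilted line whose combinatorial type is obtained from $T$ by contracting the zero-edges. Injectivity and continuity are then immediate from the construction. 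For (ii), any stable nodal $n$-quilted line of type $T'$ admits a refinement to some maximal $T$ (further subdividing is always possible since the stability condition only demands $\geq 3$ edges per vertex); the original configuration then lies in the image of $\phi_T$ with $\epsilon_e=0$ precisely on the edges of $T$ that collapse to recover $T'$.

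The main obstacle is (iii). On the interior overlap inside $Q(n,\br)$, each coordinate $\epsilon_e=x_{k,l}/x_{i,j}$ is a smooth function of the position variables $x_{*,*}$, so each $T$-coordinate may be written as a Laurent monomial in the $T'$-coordinates by telescoping $x_{k,l}/x_{i,j}$ along a path in $T'$. The crucial claim is that on each common corner stratum these Laurent monomials in fact have only non-negative integer exponents, so they extend to polynomial maps on $[0,\infty)^{|E(T')|}$ (and the inverse statement holds by symmetry). My strategy is to reduce to the local case of two maximal trees differing by a single elementary edge exchange (two different maximal refinements of the same non-maximal vertex): such a pair meets along a codimension-one face, and the transition can be written out explicitly as a polynomial substitution of the form $\epsilon_e\mapsto\epsilon_e'\prod_j\epsilon_{e_j}'$ with non-negative integer exponents. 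A general transition between two maximal trees factors as a composition of such elementary exchanges and inherits the same form. Combined with (i) and (ii), this produces an atlas of smooth charts modelled on $[0,\infty)^{\dim Q(n,\br)}$ with smooth transitions, exhibiting $\ol{Q}(n,\br)$ as a smooth manifold-with-corners.
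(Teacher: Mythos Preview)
Your plan for (i) and (ii) is fine and matches the paper's one-line argument. The genuine problem is in (iii): your ``crucial claim'' that the transition Laurent monomials have only non-negative exponents, and hence extend to polynomial maps on all of $[0,\infty)^{|E(T')|}$, is false already for an elementary edge exchange.

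Take $\ol{Q}(3,(1,1,1))$ with homogeneous coordinates $(x_1:x_2:x_3)$. The adjacent maximal tubings $\{1\}\subset\{1,2\}$ and $\{2\}\subset\{1,2\}$ have chart ratios $(\phi_1,\phi_2)=(x_1/x_2,\,x_2/x_3)$ and $(\phi_1',\phi_2')=(x_2/x_1,\,x_1/x_3)$ respectively. The transition is
\[
(\phi_1',\phi_2')=\bigl(\phi_1^{-1},\ \phi_1\phi_2\bigr),
\]
which has a negative exponent in $\phi_1$. So your proposed form $\epsilon_e\mapsto \epsilon_e'\prod_j\epsilon_{e_j}'$ with non-negative exponents does not hold, and the ``inverse by symmetry'' remark should already have warned you: a pair of mutually inverse polynomial self-maps of $[0,\infty)^k$ would force all the exponent matrices to be permutation matrices, which they visibly are not here.

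What is true, and what the paper actually uses, is much weaker: the only coordinates that can appear with negative exponents are those attached to edges \emph{not} shared by the two maximal types. On the overlap of the two charts these non-shared coordinates are necessarily finite and non-zero (if $\phi_e=0$ for an edge $e\notin T'$, the corresponding nodal configuration is not refined by $T'$ and hence lies outside the $T'$-chart). Thus the Laurent monomials are smooth on the overlap even though they are not globally polynomial. In the example above the overlap is $\{\phi_1>0\}$, on which $\phi_1^{-1}$ is smooth. Your telescoping computation is the right way to see the monomial form; you just need to replace the non-negativity claim with this description of the overlap.
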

\begin{proof}
It is straightforward to check that every element of $\ol{Q}(n,\br)$ lies in the chart of any maximal combinatorial type refining its combinatorial type.  It is also immediate from the construction that the transition functions are rational functions in the chart coordinates.   Since charts only intersect where the coordinates where they differ are finite and non-zero, the transition functions are smooth. 
\end{proof}

\section{Connection with graph associahedra}

Graph associahedra were introduced by Carr and Devadoss \cite{cardev}, and given convex hull realizations with integer coordinates by Devadoss \cite{dev09}.   We begin this section by recalling the definitions of graph associahedra from \cite{cardev}, as well as the algorithm for integer coordinates of \cite{dev09}.

Let $\Gamma = (V,E)$ be a finite graph with vertices $V$ and edges $E$, with no multiple edges and no loops. 

%The graph $\Gamma$ determines a convex polytope of dimension $|V| - 1 $, whose face poset is described in terms of {\em tubings} of $\Gamma$.  

\begin{definition}
A {\em tube} is a proper connected subset of vertices of $\Gamma$.   A {\em tubing} of $\Gamma$ is a collection of tubes, such that each pair of tubes in the tubing satisfies the following admissibility conditions:
\begin{enumerate}
\item a pair of tubes may be {\em nested}  provided that the inner tube is a proper subset of the outer tube.

\item a pair of tubes may be {\em disjoint} provided that there is no edge connecting a vertex in one tube with a vertex in the other tube.   
\end{enumerate}

\end{definition}

\begin{example}
Figure \ref{fig:tubes} shows examples of non-admissible and admissible tubings for the same underlying graph.  
\end{example}

\begin{figure}[h]
\center{
\includegraphics[height=0.7in]{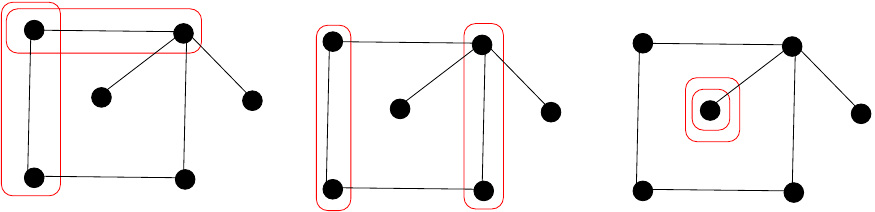} \\
\includegraphics[height=0.7in]{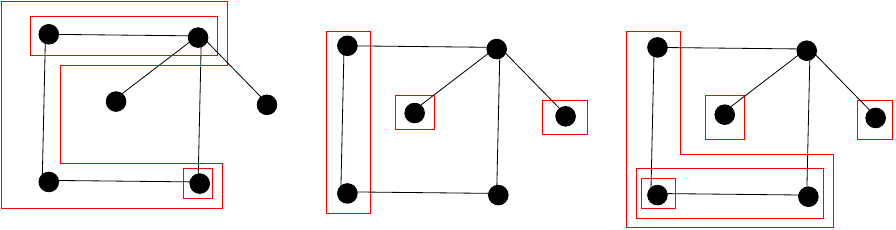}
}
\caption{Top row: non-admissible tubings.  Bottom row: examples of admissible tubings.  The rightmost tubing is a maximal tubing.   }\label{fig:tubes}
\end{figure}%\marginpar{fig:tubes}

\begin{definition} The {\em graph associahedron} $K_{\Gamma}$ is a simple polytope whose facets of codimension $k$ are indexed by tubings containing $k$ tubes.  A facet of codimension $k$ is contained in a facet of codimension $k^\prime$ if and only if the set of tubes in the $k^\prime$-tubing contains the set of tubes in the $k$-tubing.  
\end{definition}
In particular, maximal tubings correspond to zero dimensional facets. 

\begin{definition} Two maximal tubings are called {\em adjacent} or {\em neighboring} if they differ by precisely one tube.  A tube $T$ is called {\em minimal} for a vertex $v$ of $\Gamma$ if $v$ is in the tube $T$ but is not in any smaller tube.
\end{definition}

It is straightforward to check that:
\begin{lemma}\label{list}
In a maximal tubing of $\Gamma$, each tube $T$ determines a unique vertex $v_T$ of $V$: namely, the vertex for which $T$ is minimal.  When the set of all vertices, $V$, is also counted as a tube, then each maximal tubing determines a bijection between tubes and vertices of $\Gamma$.  
\end{lemma}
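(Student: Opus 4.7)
The plan is to prove three things in order: (a) each tube $T$ of the tubing has at least one vertex not contained in any strictly smaller tube of the tubing, (b) such a vertex is unique, and (c) the resulting map extends to a bijection once $V$ is added. Throughout, fix a maximal tubing $\tau$, and for a tube $T$ in $\tau$ (or $T = V$) write $T_1,\dots,T_k$ for its immediate sub-tubes in $\tau$ (the tubes of $\tau$ that are strictly contained in $T$ and maximal among such). By admissibility, distinct $T_i, T_j$ cannot nest and hence are disjoint, and there are no edges of $\Gamma$ between them. Let $R(T) = T \setminus \bigcup_i T_i$.

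For (a), suppose $R(T) = \emptyset$, so $T = T_1 \sqcup \cdots \sqcup T_k$ is a disjoint union with no edges between distinct summands. Since $T$ is connected we need $k \leq 1$; but then $T = T_1$ contradicts $T_1 \subsetneq T$. So $R(T) \neq \emptyset$.

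For (b), suppose for contradiction that $|R(T)| \geq 2$ and pick $v \in R(T)$. I would introduce the candidate new tube
\[
S_v := \{v\} \cup \bigcup \bigl\{ T_i : \text{some edge of } \Gamma \text{ joins } v \text{ to } T_i \bigr\}.
\]
Then $S_v$ is connected (each $T_i$ it contains is connected and joined to $v$), and $S_v \subsetneq T$ since $R(T)$ contains a second vertex $v' \neq v$ with $v' \notin S_v$. To check $\tau \cup \{S_v\}$ remains an admissible tubing, I would run through cases: $S_v$ is nested inside $T$ and inside every tube of $\tau$ containing $T$; it contains each $T_i$ adjacent to $v$ (and hence all their descendants), giving nesting; it is disjoint from every other $T_j$ with no intervening edges, since by construction $v$ has no edge to $T_j$, and the $T_i \subset S_v$ have no edges to $T_j$ by admissibility of $\tau$; and tubes of $\tau$ disjoint from $T$ are disjoint from $S_v$ with no new edges. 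Since $v \in R(T)$ is not contained in any tube of $\tau$ strictly smaller than $T$, the tube $S_v$ cannot already lie in $\tau$. Hence $\tau \cup \{S_v\}$ is a strictly larger admissible tubing, contradicting maximality; so $|R(T)| = 1$.

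For (c), injectivity: if $T \neq T'$ both correspond to $v$, then $v \in T \cap T'$, forcing $T$ and $T'$ to be nested by admissibility, say $T \subsetneq T'$; but then $T$ is a strictly smaller tube of $\tau$ containing $v$, contradicting $v \in R(T')$. Surjectivity: for any $v \in V$, the tubes of $\tau \cup \{V\}$ containing $v$ are pairwise nested (two tubes containing $v$ cannot be disjoint), so they form a chain with a minimum $T$, and then $v \in R(T)$ so $v = v_T$. The main obstacle is step (b); the subtlety is defining $S_v$ so that admissibility with \emph{every} other tube of $\tau$ is automatic, which hinges on the observation that edges between the immediate children $T_i$ are already forbidden by admissibility of $\tau$, and that any deeper descendant lives inside some $T_i$ and so inherits its adjacency behavior.
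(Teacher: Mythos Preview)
Your argument is correct. The paper itself gives no proof of this lemma, only the phrase ``It is straightforward to check that,'' so there is nothing to compare against; your proposal simply supplies the omitted details. The key step, part~(b), is handled well: the candidate tube $S_v$ is exactly the right object, and your case analysis for admissibility of $\tau\cup\{S_v\}$ is complete once one observes (as you implicitly use) that every tube of $\tau$ strictly contained in $T$ lies inside some immediate child $T_i$, so that the children and their descendants, together with $T$ and its ancestors and the tubes disjoint from $T$, exhaust $\tau$. One small remark: in part~(a) and the case $T=V$ you are tacitly using that $\Gamma$ is connected (so that a partition of $T$ into non-adjacent connected pieces forces $k\le 1$); this is satisfied for the graphs $\Gamma(n,\br)$ under consideration in the paper.
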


Now we recall the algorithm of \cite{dev09} which assigns an integral vector to each maximal tubing, such that the convex hull of all the vectors realizes the corresponding graph associahedron.  

\begin{definition}
To each maximal tubing of $\Gamma$ assign a {\em weight vector}, which is a function $\bw: V \to \Z$, by induction on the number of vertices in a tube:
\begin{enumerate}

\item If $v$ is the only vertex in a tube, $\bw(v)=0$.

\item Otherwise, let $T$ be the minimal tube containing $v$.  By maximality, all other vertices 
in the tube $T$ are contained in tubes of smaller size than $|T|$, so by induction the function $\bw$ is defined on them already.  Then set $\bw(v) = 3^{|T|-2} - \sum_{v^\prime\in T, v^\prime\neq v} \bw(v^\prime)$.  
\end{enumerate}

In other words, if a tube $T$ has two or more vertices in it, the sum of all weights of its vertices should be $3^{|T|-2}$.  For the purposes of this definition, we think of the last vertex in the process (which is not contained in any tube) as belonging to a tube of size $|V|$.  

\end{definition}

\begin{figure}
\center{\includegraphics{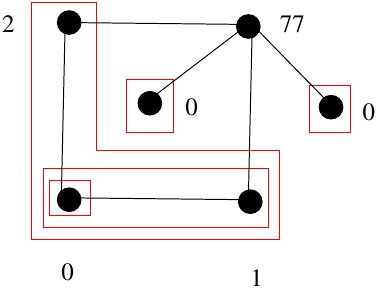}}
\caption{The weights for the maximal tubing of Figure \ref{fig:tubes}.} \label{fig:egwt}
\end{figure}%\marginpar{fig:egwt}

\subsection{Moduli of quilted lines as graph associahedra}

Now we show how the moduli spaces $\ol{Q}(n,\br)$ are related to certain graph associahedra. 

\begin{definition}  For fixed $n$ and $\br$, the {\em dual graph to $\ol{Q}(n,\br)$} is the graph $\Gamma(n,\br)$ consisting of the complete graph on $n$ vertices $v_1, \ldots, v_n$ with a path of length $r_i - 1$ adjoined to each vertex $v_i$.  
\end{definition}

\begin{remark} \label{rmk}%\marginpar{rmk}
The graph $\Gamma(n,\br)$ is dual to $Q(n,\br)$ in the following sense:  if one thinks of the elements of $Q(n,\br)$ as rays with markings, then each vertex of the graph $\Gamma(n,\br)$ corresponds to a {\em finite} edge in the rays, and two vertices in $\Gamma(n,\br)$ are connected by an edge if they correspond to {\em adjacent} finite edges in the rays.  In particular, every vertex of $\Gamma(n,\br)$ corresponds to a variable $x_{i,j}$ of Definition \ref{coord}.  A tubing in the graph $\Gamma(n,\br)$ will correspond to all variables $x_{i,j}$ inside the tube going to zero.  
\end{remark}
\begin{example}
See Figure \ref{qgraphfig}.
\end{example}

\begin{figure}[h]
\includegraphics[height=1.5in]{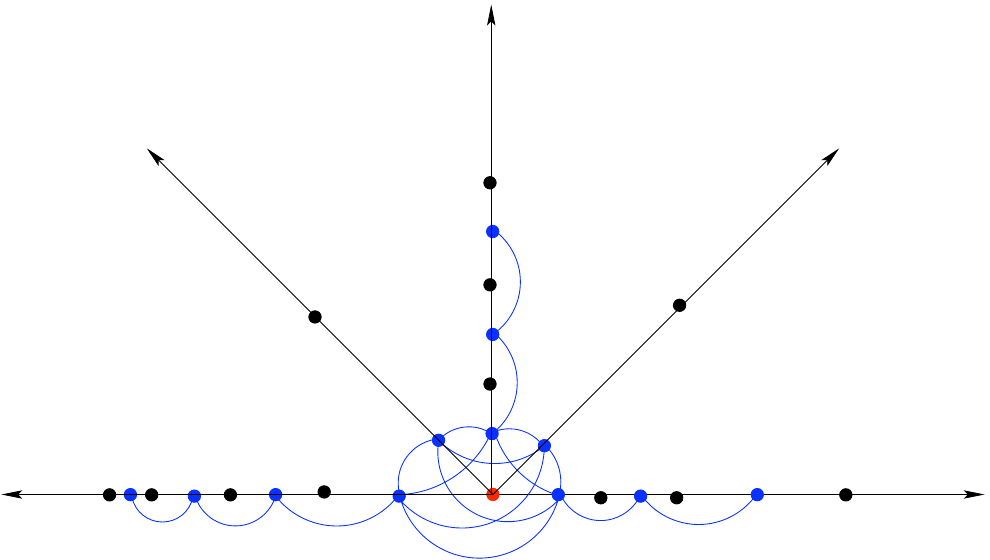} \ \ \ \ \ \ \includegraphics[height=1in]{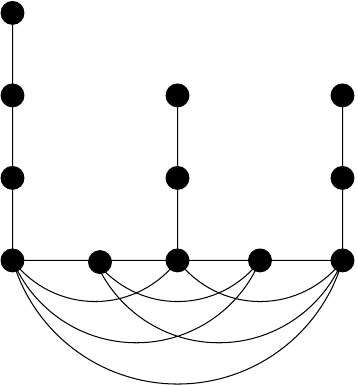}
\caption{{\em Left.} Marked rays in $Q(5,(4,1,3,1,3)$ with the dual graph $\Gamma(5,(4,1,3,1,3))$ superimposed in blue. {\em Right.} $\Gamma(5,(4,1,3,1,3))$ again.}
\label{qgraphfig}
\end{figure} 

\begin{lemma}\label{bij}
There is a canonical bijection between tubings of $\Gamma(n,\br)$ and combinatorial types of $\ol{Q}(n,\br)$ which respects their respective poset structures.  Under this bijection, each tube in a tubing corresponds to a node in the combinatorial type.   For a maximal tubing $\mathcal{T}$, the chart ratio for the node corresponding to a tube $T$ is $\phi = x_{i,j}/x_{k,l}$ where $x_{i,j}$ is the variable labeling the (unique) vertex of $\Gamma(n,\br)$ for which $T$ is maximal, and $x_{k,l}$ is the variable labeling the (unique) vertex of $\Gamma(n,\br)$ for which the smallest tube containing $T$ is maximal. 
\end{lemma}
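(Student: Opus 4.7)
The plan is to exhibit the bijection explicitly in both directions and then read off the poset-preservation and the chart-ratio formula. Given a tubing $\mathcal{T}$, I would build a combinatorial type $T(\mathcal{T})$ by inserting one bubble per tube, working from outermost to innermost: each tube $T$ creates a new bubble component by simultaneously collapsing the ray-segments indexed by the vertices in $T$. Because $\Gamma(n,\br)$ is the complete graph $K_n$ with a path attached at each $v_i$, a connected subset of its vertices either lies entirely in a single attached path (yielding a disk bubble on the corresponding page) or meets $K_n$ nontrivially (yielding a quilted-line bubble on the spine involving those pages). Conversely, given a combinatorial type $T$, each finite edge $e \in E(T)$ separates $T$ into two subtrees; I would define $\tau(e) \subseteq V(\Gamma(n,\br))$ to be the set of variables $x_{i,j}$ whose corresponding ray-segments (via Remark \ref{rmk}) lie in the subtree opposite $e_{+\infty}$. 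Connectedness of $\tau(e)$ follows from Remark \ref{rmk} together with the completeness of $K_n$, and one verifies directly that $T \mapsto \{\tau(e) : e \in E(T)\}$ inverts $\mathcal{T} \mapsto T(\mathcal{T})$.

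The crux is then to match admissibility of tubings with the conditions defining an $n$-paged ribbon tree. For two nodes $e, e' \in E(T)$, either one lies in the subtree cut off by the other --- giving nested tubes $\tau(e) \subsetneq \tau(e')$ --- or they lie in disjoint sibling subtrees, giving disjoint tubes. In the disjoint case, the admissibility requirement that no edge of $\Gamma(n,\br)$ connect the two tubes is enforced precisely by condition (3) in the definition of $n$-paged ribbon trees: no edge connects distinct pages $V_i$ and $V_j$, and the $K_n$-completeness of $\Gamma(n,\br)$ then forces any pair of sub-bubbles both touching spine-adjacent segments of different pages to appear nested in a single spine bubble rather than as disjoint siblings. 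Stability of the tree and properness of tubes match up by a routine count. This step will be the main obstacle, and the key point is that completeness of the $K_n$-subgraph is the exact combinatorial encoding of the single origin from which all $n$ rays emanate, ruling out disjoint sibling spine-bubbles.

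Finally, the bijection respects poset structures essentially by construction: refining a tubing (adding a tube) inserts a new node in the tree, and smoothing a node corresponds to removing a tube. For the chart-ratio formula, the charts constructed above assign to a node between $Q^+$ and $Q^-$ the ratio (variable on $Q^-$)/(variable on $Q^+$) of the unique finite non-zero variables on each component. By the construction of $T(\mathcal{T})$, the component $Q^-$ cut off by $\tau(e)$ is the bubble whose leftover finite ray-segment corresponds to the unique vertex of $\Gamma(n,\br)$ for which $\tau(e)$ is the smallest tube containing it --- that is, the vertex designated to $\tau(e)$ by Lemma \ref{list}, which carries the variable $x_{i,j}$. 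The same reasoning on $Q^+$ identifies $x_{k,l}$ with the vertex designated to the parent tube, giving the ratio $\phi = x_{i,j}/x_{k,l}$ claimed in the lemma.
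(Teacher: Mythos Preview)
Your proposal is correct and follows the same route as the paper: both use the vertex/variable dictionary of Remark~\ref{rmk} to send tubes to nodes, then check that tubing admissibility translates into the defining conditions on $n$-paged ribbon trees; the paper simply asserts this translation in one sentence, while you spell it out and also write down the inverse map $e \mapsto \tau(e)$ explicitly. One minor imprecision worth fixing: the tree condition that forbids disjoint sibling spine bubbles---and hence matches the $K_n$-edge part of the admissibility condition you correctly emphasize---is condition~(a), that the spine be a path, rather than condition~(3); condition~(3) instead prevents a page-bubble from attaching directly to a bubble on a different page, which on the tubing side is automatic from the shape of $\Gamma(n,\br)$ and needs no admissibility hypothesis.
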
%\marginpar{bij}

\begin{proof}
As in Remark \ref{rmk}, the vertices of $\Gamma(n, \br)$ correspond to variables $x_{i,j}$ of Definition \ref{coord}.  Each tube in the tubing determines a node, which separates the marked points that are adjacent to any of the $x_{i,j}$ in the tube, from all the other marked points.  The admissibility conditions on the tubings translate directly into admissibility  conditions on combinatorial types.    

Each tube in a tubing determines a particular disk component in the bubble tree: namely, the component adjacent to the node corresponding to $T$ that is {\em furthest} from the distinguished marking $+\infty$.  Every vertex of $\Gamma(n,\br)$ that is contained in the tube $T$ and which is {\em not} contained in another tube nested in $T$ corresponds to a variable $x_{i,j}$ which is finite and non-zero on that disk component.  
\end{proof}

The next lemma is the main ingredient in the proof of Theorem \ref{thmA}. 

\begin{lemma}\label{tech}
Suppose that two maximal tubings $\mathcal{T}$ and $\widetilde{\mathcal{T}}$ are adjacent, i.e.,  there is a tube $T \in \mathcal{T}$ and a tube $T^\prime \in \widetilde{\mathcal{T}}$ such that the reduced tubings $\mathcal{T} - \{T\} $ and $ \widetilde{\mathcal{T}}-\{T^\prime\}$ are identical.  Let $\bw$ and $\widetilde{\bw}$ be their respective weight vectors.  Let $\phi$ be the chart ratio labeling the node corresponding to the tube $T$ under the bijection of Lemma \ref{bij}.  Then 
\[
\bx^{\widetilde{\bw}-\bw} = \phi^m
\]  
for some $m \geq 1$.  More generally, for any two maximal tubings $\mathcal{T}$ and $\widetilde{\mathcal{T}}$, there are tubes $T_1,\ldots, T_k$ of $\mathcal{T}$ and integers $m_1,\ldots,m_k \geq 1$ such that
\[
\bx^{\widetilde{\bw} - \bw} = \phi_1^{m_1}\phi_2^{m_2} \ldots \phi_k^{m_k}
\]
where $\phi_i$ is the ratio for the node corresponding to the tube $T_i$. 
\end{lemma}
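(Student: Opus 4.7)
The plan is to prove the adjacent case first by a direct weight computation, then deduce the general case by induction on flip-distance in the graph associahedron.

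\emph{Adjacent case.} Let $\mathcal{T}_0 = \mathcal{T} \setminus \{T\} = \widetilde{\mathcal{T}} \setminus \{T'\}$, and let $S$ be the smallest tube of $\mathcal{T}_0$ strictly containing both $T$ and $T'$ (taking $S = V$ if no such tube exists). The recursion $\bw(v_R) = 3^{|R|-2} - \sum 3^{|R'|-2}$ (summed over non-singleton children $R'$ of $R$ in the tubing) shows that $\bw(v)$ depends only on the local tube structure at $v$'s minimal tube. Since $\mathcal{T}$ and $\widetilde{\mathcal{T}}$ agree outside the ``slot'' at $S$, the weights $\bw$ and $\widetilde{\bw}$ agree at every vertex except possibly at the ``free vertices of $S$ in $\mathcal{T}_0$''. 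By maximality of both tubings, these free vertices are exactly two, namely $v_T$ and $v_S$. A direct computation then yields $\widetilde{\bw}-\bw = m(\delta_{v_T}-\delta_{v_S})$ for some integer $m$; positivity $m\geq 1$ follows from the non-negativity of the weights at $v_S$ in both tubings together with the distinctness $\widetilde{\bw} \neq \bw$ guaranteed by the convex realization of \cite{dev09}. Hence $\bx^{\widetilde{\bw}-\bw} = (x_{v_T}/x_{v_S})^m = \phi^m$, with $\phi$ the chart ratio from Lemma \ref{bij}.

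\emph{General case.} I would induct on the flip-distance $d(\mathcal{T}, \widetilde{\mathcal{T}})$ in $K_{\Gamma(n,\br)}$. For $d \geq 1$, pick a tube $T_1 \in \mathcal{T} \setminus \widetilde{\mathcal{T}}$ that is outermost among tubes of $\mathcal{T}$ not in $\widetilde{\mathcal{T}}$, and perform the flip $\mathcal{T} \to \mathcal{T}_1$ replacing $T_1$ by some $T_1^*$. Since $T_1$ is outermost, all other tubes of $\mathcal{T}$ appear in $\mathcal{T}_1$ with unchanged minimal vertices, so the chart ratio of $T_1$ from the adjacent case matches $\phi_{T_1}$ of $\mathcal{T}$; by the adjacent case $\bx^{\bw_1 - \bw} = \phi_{T_1}^{m_1}$. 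Applying the inductive hypothesis to $(\mathcal{T}_1, \widetilde{\mathcal{T}})$ yields $\bx^{\widetilde{\bw} - \bw_1}$ as a product of $\mathcal{T}_1$-ratios; since these coincide with the corresponding $\mathcal{T}$-ratios (for all tubes other than $T_1$, which is gone), combining gives the desired product.

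The hardest part is verifying the positivity of the integer exponents in the general case, which amounts to a convexity property of the polytope $K_{\Gamma(n,\br)}$ realized via the weight vectors, namely that $\widetilde{\bw}$ lies in the positive cone spanned by the edge directions at $\bw$. I expect this to follow from the careful ``outermost first'' choice of flip sequence, since each step then produces a positive integer multiple of a $\mathcal{T}$-edge direction, and the nested structure of the flips prevents any cancellation against later factors.
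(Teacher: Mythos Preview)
Your approach differs substantially from the paper's, and it has two genuine gaps.

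\textbf{Adjacent case: the sign of $m$.} Your reduction to $\widetilde{\bw}-\bw = m(\delta_{v_T}-\delta_{v_S})$ is correct (the two free vertices of $S$ in $\mathcal{T}_0$ are exactly $v_T^{\mathcal T}$ and $v_S^{\mathcal T}$, and all other weights are determined by $\mathcal{T}_0$ alone). But your justification that $m\ge 1$ does not work: non-negativity of both $\bw(v_S)$ and $\widetilde{\bw}(v_S)$ only gives $|m|\le \max(\bw(v_S),\widetilde{\bw}(v_S))$, and distinctness only gives $m\neq 0$. Nothing you wrote fixes the sign. One actually needs a monotonicity statement, e.g.\ that the weight of the minimal vertex of a tube strictly exceeds the total weight of any smaller tube; this is exactly what the paper uses.

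\textbf{General case: the ratios do not transfer.} After the flip $\mathcal T\to\mathcal T_1$ you assert that the $\mathcal T_1$-chart ratios for tubes in $\mathcal T_0$ agree with the corresponding $\mathcal T$-chart ratios. This is false: the ratio attached to a tube $R$ is $x_{v_R}/x_{v_{\mathrm{parent}(R)}}$, and both $v_S$ and the parent of any former child of $T_1$ (or new child of $T_1^*$) can change under the flip. Moreover, the inductive product for $\mathcal T_1\to\widetilde{\mathcal T}$ may involve the ratio for $T_1^*$, which is not a tube of $\mathcal T$ at all; ``outermost first'' does not by itself force $T_1^*\in\widetilde{\mathcal T}$. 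So even granting the adjacent case, the induction does not assemble into a product of $\mathcal T$-ratios with positive exponents. Your own last paragraph essentially flags this as unfinished.

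\textbf{How the paper proceeds.} The paper bypasses both issues with a single identity: for \emph{any} weight vector $\bw'$ one has
\[
\bx^{\bw'} \;=\; \Bigl(\prod_{i}\phi_i^{\,w'(T_i)}\Bigr)\,x_*^{\,w'(V)},
\]
where $\phi_i = x_{in}(T_i)/x_{out}(T_i)$ are the $\mathcal T$-chart ratios and $w'(T_i)=\sum_{v\in T_i}\bw'(v)$. Applying this to $\bw$ and $\widetilde{\bw}$ and dividing gives $\bx^{\widetilde{\bw}-\bw}=\prod_i\phi_i^{\,\widetilde w(T_i)-w(T_i)}$ directly, with no induction. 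Positivity of each exponent is then a one-line argument: if $T_i\in\widetilde{\mathcal T}$ the exponent is zero; otherwise some vertex of $T_i$ is, in $\widetilde{\mathcal T}$, minimal for a tube strictly larger than $T_i$, and the weight recursion forces its $\widetilde{\bw}$-weight alone to exceed $w(T_i)=3^{|T_i|-2}$. The adjacent case then drops out as the special case where only one exponent is nonzero.
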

\begin{proof}

The proof is a calculation.  Begin by fixing a labeling of the tubes in the reference tubing $\mathcal{T}$.  Label them $T_1, T_2,\ldots, T_k$, where $k = |V|-1$.   Each tube $T_i$ determines a pair of vertices, the vertex immediately inside $T_i$ and the vertex immediately outside $T_i$. By Lemma \ref{bij} each  vertex is identified with a variable $x_{i,j}$, so we write $x_{in}(T_i)$ and $x_{out}(T_i)$ for the variables corresponding to the vertices immediately inside and immediately outside of $T_i$ respectively.  Note also by Lemma \ref{bij} that the chart ratio for the node corresponding to the tube $T_i$ is precisely $x_{in}(T_i)/x_{out}(T_i)$.  There is exactly one vertex which is not inside a tube, let us label this outermost vertex by $x_*$.   With this notation, it follows from Lemma \ref{list} that a complete indexing of the set of vertices is 
\begin{equation}\label{ind}
V = \{x_{in}(T_1), \ldots, x_{in}(T_k), x_*\}.  
\end{equation}
Let $\bw: V \to \Z$ be the weight vector for $\mathcal{T}$, and label its entries by $w_1, \ldots, w_k, w_*$.  Write $w(T_i)$ for the sum of the weights in the tube $T_i$, and write $w(V) = 3^{|V|-2}$ for the sum of all weights in the graph.  Using this notation we can then write
\bea
\bx^{\bw} & := & x_{in}(T_1)^{w_1} x_{in}(T_2)^{w_2}\ldots x_{in}(T_k)^{w_k} x_*^{w_*} \\
& = & \left(\frac{ x_{in}(T_1)}{x_{out}(T_1)}\right)^{w(T_i)} \left(\frac{ x_{in}(T_2)}{x_{out}(T_2)}\right)^{w(T_2)} \ldots \left(\frac{ x_{in}(T_k)}{x_{out}(T_k)}\right)^{w(T_k)} \ x_*^{w(V)}\\
& =: & \phi_1^{w(T_1)} \ldots \phi_k^{w(T_k)} x_*^{w(V)}.
\eea

Now consider the weights for another maximal tubing $\widetilde{\mathcal{T}}$, with weight vector $\tilde{w}^\prime: V \to \Z$, with entries $\widetilde{w}_1, \ldots, \widetilde{w}_k, \widetilde{w}_*$ with respect to the indexing \eqref{ind}, and writing $\widetilde{w}(T_i)$ for the sum of the weights in the tube $T_i$.  We have that
\bea
\bx^{\widetilde{\bw}} & := & x_{in}(T_1)^{\widetilde{w}_1} x_{in}(T_2)^{\widetilde{w}_2} \ldots x_{in}(T_k)^{\widetilde{w}_k} x_*^{\widetilde{w}_*}\\
& = & \left(\frac{ x_{in}(T_1)}{x_{out}(T_1)}\right)^{\widetilde{w}(T_i)} \left(\frac{ x_{in}(T_2)}{x_{out}(T_2)}\right)^{\widetilde{w}(T_2)} \ldots \left(\frac{ x_{in}(T_k)}{x_{out}(T_k)}\right)^{\widetilde{w}(T_k)} \ x_*^{\widetilde{w}(V)}\\
& = : &   \phi_1^{\widetilde{w}(T_1)} \ldots \phi_k^{\widetilde{w}(T_k)} x_*^{\widetilde{w}(V)}.
\eea

Since $w(V) = \widetilde{w}(V)$, it follows that 
\bea
\bx^{\widetilde{\bw} - \bw} & =  \phi_1^{\widetilde{w}(T_1) - w(T_1)} \ldots \phi_k^{\widetilde{w}(T_k) - w(T_k)},
\eea
and we claim that $\widetilde{w}(T_i) - w(T_i) \geq 0$ for all $i = 1,\ldots, k$.  To prove the claim, suppose first that $T_i$ is a tube in the tubing $\widetilde{\mathcal{T}}$ as well.  Then, $\widetilde{w}(T_i) - w(T_i) = 0$, since the sum of weights in a tube is completely determined by the number of vertices in the tube.  Now suppose that $T_i$ is {\em not} a tube in $\widetilde{\mathcal{T}}$.   But in this case, there is at least one vertex in $T_i$ that, in the tubing $\widetilde{\mathcal{T}}$, is the maximal vertex for a tube which is strictly larger than $T_i$.     The algorithm for the weight vectors has the property that the maximal vertex in a tube always has a weight strictly greater than the total weight of any smaller tube.  Let $v \in T_i$ be such a vertex, which with respect to the tubing $\widetilde{\mathcal{T}}$ is maximal for a tube that strictly contains $T_i$.  Then, 
\[
\widetilde{w}(T_i)  \geq \widetilde{w}(v) > w(T_i).
\]
This proves the claim, and the general statement of the Lemma.

The first part of the Lemma is a special case: without loss of generality, suppose that the tube $T_1$ is the only tube in $\mathcal{T}$ that is different from the tubing in $\widetilde{\mathcal{T}}$.  Then we have
\bea
\bx^{\widetilde{\bw} - \bw} = \phi_1^{m_1},
\eea 
where $m_1 \geq 1$ since $T_1$ is {\em not} a tube in $\widetilde{\mathcal{T}}$.
\end{proof}

Now we can prove Theorem \ref{thmA}, namely:
\begin{thmA} 
$\ol{Q}(n,\br)$ is homeomorphic to the graph associahedron for  $\Gamma(n,\br)$. 
\end{thmA}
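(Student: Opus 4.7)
My plan is to construct an explicit homeomorphism $\mu : \ol{Q}(n,\br) \to K_{\Gamma(n,\br)}$ using a moment-map--style weighted average of Devadoss's integral weight vectors, and then use Lemma \ref{tech} to show that this map extends continuously to the compactification and realizes the poset bijection of Lemma \ref{bij} on boundary strata. Concretely, on the open moduli space $Q(n,\br)$, I define
\[
\mu(p) \ = \ \frac{\sum_{\mathcal{T}} \bx(p)^{\bw_\mathcal{T}}\, \bw_\mathcal{T}}{\sum_{\mathcal{T}} \bx(p)^{\bw_\mathcal{T}}},
\]
where $\mathcal{T}$ ranges over the maximal tubings of $\Gamma(n,\br)$ and $\bx^{\bw_\mathcal{T}} := \prod_v x_v^{\bw_\mathcal{T}(v)}$. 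Because the total weight $\sum_v \bw_\mathcal{T}(v) = 3^{|V|-2}$ is independent of $\mathcal{T}$, simultaneous rescaling of the $x_{i,j}$ multiplies every term in the numerator and denominator by the same factor, so $\mu$ descends to the homogeneous coordinates. Since it is a convex combination of the vertices $\{\bw_\mathcal{T}\}$ of $K_{\Gamma(n,\br)}$, its image lies in the polytope.

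Next I would show that $\mu$ extends continuously across every chart. Near a zero-dimensional facet of combinatorial type $\mathcal{T}$, factor out $\bx^{\bw_\mathcal{T}}$ to rewrite
\[
\mu(p) \ = \ \bw_\mathcal{T} + \frac{\sum_{\widetilde{\mathcal{T}} \neq \mathcal{T}} \bx^{\widetilde{\bw}-\bw_\mathcal{T}}\,(\widetilde{\bw}-\bw_\mathcal{T})}{1 + \sum_{\widetilde{\mathcal{T}} \neq \mathcal{T}} \bx^{\widetilde{\bw}-\bw_\mathcal{T}}}.
\]
By Lemma \ref{tech}, each monomial $\bx^{\widetilde{\bw}-\bw_\mathcal{T}}$ is a product $\phi_1^{m_1}\cdots\phi_k^{m_k}$ in the chart coordinates with strictly positive integer exponents, and it vanishes at the corner; hence $\mu$ extends continuously with $\mu(\mathcal{T}) = \bw_\mathcal{T}$. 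An identical analysis on a higher-dimensional boundary stratum shows that $\mu$ carries the stratum corresponding to a tubing $\mathcal{S}$ into the face of $K_{\Gamma(n,\br)}$ indexed by $\mathcal{S}$: setting to zero precisely the chart coordinates for tubes in $\mathcal{S}$ kills exactly the summands indexed by $\widetilde{\mathcal{T}}$ that fail to contain $\mathcal{S}$, and the surviving terms form a convex combination of the vertices of the $\mathcal{S}$-face.

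To conclude that $\mu$ is a homeomorphism, I would analyze it locally at each corner. Among the tubings $\widetilde{\mathcal{T}} \neq \mathcal{T}$, the adjacent ones contribute a single factor $\phi_i^{m_i}$ to the numerator, while non-adjacent ones contribute at least two factors. Keeping only the leading terms,
\[
\mu(p) \ = \ \bw_\mathcal{T} + \sum_{i=1}^k \phi_i^{m_i}\,(\widetilde{\bw}_i - \bw_\mathcal{T}) + \text{(higher-order monomials)},
\]
where $\widetilde{\mathcal{T}}_1,\ldots,\widetilde{\mathcal{T}}_k$ are the $k$ tubings adjacent to $\mathcal{T}$. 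Since $K_{\Gamma(n,\br)}$ is a simple polytope with exactly $k = \dim K_{\Gamma(n,\br)}$ edges at $\bw_\mathcal{T}$, the edge vectors $\widetilde{\bw}_i - \bw_\mathcal{T}$ are linearly independent. The change of coordinates $\psi_i := \phi_i^{m_i}$ is a homeomorphism of $[0,\infty)^k$ onto itself and identifies the leading-order map with the linear parametrization of the tangent cone of $K_{\Gamma(n,\br)}$ at $\bw_\mathcal{T}$, giving a local homeomorphism. Combined with compactness of $\ol{Q}(n,\br)$, the face-poset preservation established above, and an induction on dimension along the boundary stratification (base case being the vertices), this yields the global homeomorphism. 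The hard part I expect is making the local argument at the corner rigorous: one must verify that the non-adjacent higher-order monomials are genuinely dominated by the adjacent leading-order ones throughout a full neighborhood of the corner (not just infinitesimally), so that the leading-order homeomorphism is not destroyed by the perturbation. Lemma \ref{tech} is the tool for this, since it forces every non-adjacent monomial to contain at least two of the $\phi_i$ and hence to vanish to strictly higher order along each coordinate hyperplane than the adjacent monomials.
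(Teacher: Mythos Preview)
Your map $\mu$ is exactly the composite of the paper's toric embedding $\bx \mapsto (\bx^{\bw_1}:\ldots:\bx^{\bw_N})$ with the standard algebraic moment map on $\C P^{N-1}$, and your use of Lemma \ref{tech} to extend $\mu$ across the ratio charts and to match strata with faces is the same as the paper's identification of ratio charts with affine slices $X_i$. So the overall approach is the same. The difference is organizational: the paper first shows $\ol{Q}(n,\br)\cong X_{\ge 0}$ (the non-negative real part of the toric variety) and then \emph{cites} the classical fact that the moment map restricts to a homeomorphism $X_{\ge 0}\to \mathrm{conv}\{\bw_i\}$ (see the references to Sottile and Fulton), whereas you attempt to prove this homeomorphism by hand.

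The gap in your version is precisely in that by-hand step, and you have correctly located it. Your leading-order-plus-perturbation analysis at a corner does not go through as stated: after the substitution $\psi_i=\phi_i^{m_i}$, a non-adjacent monomial $\phi_1^{a_1}\cdots\phi_k^{a_k}$ with two positive $a_i$ is \emph{not} automatically higher order in the $\psi$-variables (e.g.\ $\phi_1\phi_2$ need not be dominated by $\phi_1^{m_1}$ when $m_1\ge 2$), so the inverse-function-type argument is not justified; and even granting local homeomorphisms at every corner, the passage to a global bijection via ``induction on dimension'' is not spelled out. The standard proofs that the moment map is a homeomorphism from $X_{\ge 0}$ to the polytope use quite different tools---a convexity/monotonicity argument \`a la Birch, or the gradient-flow picture---rather than a local perturbation analysis. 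The cleanest repair is to do what the paper does: once you have identified the ratio charts with the affine toric charts via Lemma \ref{tech}, invoke the cited result directly.
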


\begin{proof} 
To prove the homeomorphism, we identify $\ol{Q}(n,\br)$ with the non-negative real part of a projective toric variety defined by the weight vectors of the convex hull realization of \cite{dev09}.  By the general theory of toric varieties (see e.g. \cite{sottile-2002}, \cite{fulton-intro}), the moment map is a homeomorphism from the non-negative real part of the toric variety to the convex hull of the weight vectors.  

Let $\bx = (x_{1,1}: \ldots : x_{1,r_1}: \ldots : x_{n,1}: \ldots : x_{n,r_n})$ be the homogeneous coordinates  on $Q(n,\br)$ of Definition \ref{coord}, and write $\bz$ for the complexification of these  homogeneous coordinates.  Let $\mathcal{T}_1, \ldots, \mathcal{T}_N$ be all the maximal tubings  of $\Gamma(n,\br)$,  with weights $\bw_1, \ldots, \bw_N$.    
Let $X \subset \C P^{N-1}$ be the projective toric variety defined by taking the closure in $\C P^{N-1}$ of the image of the map 
\begin{eqnarray}
\bz \mapsto (\bz^{\bw_1} : \ldots : \bz^{\bw_N}).  \label{embed}
\end{eqnarray} %\marginpar{embed}
The map \eqref{embed} defines an embedding $Q(n,\br)  \hookrightarrow X$ sending $\bx   \mapsto  (\bx^{\bw_1}: \bx^{\bw_2} : \ldots : \bx^{\bw_N}),$
whose image is a dense subset of the non-negative real part of $X$.  We will show that the embedding extends to the compactification $\ol{Q}(n,\br)$, and that the non-negative part of $X$ is homeomorphic to $\ol{Q}(n,\br)$.  

Write $\mathbb{A}_1, \ldots, \mathbb{A}_N$ for the affine charts on $\C P^{N-1}$.  That is, $\mathbb{A}_i$ consists of all elements of $\C P^{N-1}$ parametrized so that the $i$-th coordinate is 1. 

Consider the affine slice $X_i:= X \cap \mathbb{A}_i$.  We will prove that the non-negative part of $X_i$ is homeomorphic to the ratio chart associated to the $0$-dimensional facet of $\ol{Q}(n,\br)$ whose combinatorial type corresponds to the maximal tubing $\mathcal{T}_i$.      
 In the affine slice $X_i$, the image of $Q(n,\br)$ under the map \eqref{embed} is 
\bea
(\bx^{\bw_1 - \bw_i} : \ldots : \underset{i}{\underbrace{1}} : \ldots : \bx^{\bw_N - \bw_i}).
\eea 
By Lemma \ref{tech} we get a smooth identification 
\[
(\phi_1, \ldots, \phi_k): [0,\infty)^{k} \to  (\bx^{\bw_1 - \bw_i} : \ldots : \underset{i}{\underbrace{1}} : \ldots : \bx^{\bw_N - \bw_i})
\] 
of the ratio chart for the $i$-th tubing with $\ol{Q}(n,\br) \cap X_i$, since on the domain $[0,\infty)$ the map $\phi_k \mapsto \phi_k^{m}$ for $m\geq 1$ is a smooth homeomorphism.    The non-negative part of the toric variety is therefore homeomorphic to $\ol{Q}(n,\br)$, since the non-negative part of each affine chart is identified with a ratio chart of $\ol{Q}(n,\br)$.  The non-negative part of the toric variety (hence $\ol{Q}(n,\br)$) is homeomorphic by the moment map to the convex hull of the weight vectors (see e.g. \cite{sottile-2002}), which in turn realize the graph associahedron, by \cite{dev09}.  
\end{proof}

\subsection{Special cases: associahedra and permutahedra}

For $\br \in \Z^n_{\geq 0}$, let $|\supp \br|$ be the number of non-zero entries in $\br$.  By stability,  $|\supp \br| \geq 1$.

It is shown in \cite{cardev} that in the special case that a graph is the complete graph on $n$ vertices, the corresponding graph associahedron is the $n$-th permutahedron, and in the special case that a graph is a path on $n$ vertices, the corresponding graph associahedron is the Stasheff associahedron $K_n$. So an immediate corollary of Theorem \ref{thmA} is:

\begin{corollary} \begin{enumerate}
\item If $\br$ contains only 1's and 0's, then $\ol{Q}(n,\br)$ is the $|\supp \br |$-th permutahedron.  
\item If $|\supp \br| \leq 2$,  then $\ol{Q}(n,\br)$ is the $(1 + \sum\limits_{i=1}^n r_i)$-th associahedron. 
\end{enumerate}
\end{corollary}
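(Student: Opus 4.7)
The plan is to deduce both parts directly from Theorem \ref{thmA} together with the two Carr-Devadoss identifications that are quoted just before the statement of the corollary: the graph associahedron of a complete graph on $m$ vertices is the $m$-th permutahedron, and the graph associahedron of a path on $m$ vertices is a Stasheff associahedron. Once Theorem \ref{thmA} is applied, all that remains is to understand the combinatorial shape of the dual graph $\Gamma(n,\br)$ in each of the two special cases.

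For part (a), suppose every entry of $\br$ is $0$ or $1$. Then, for each $i$ with $r_i = 1$, the ``path of length $r_i - 1$'' adjoined to $v_i$ in the definition of $\Gamma(n,\br)$ is empty, while the indices $i$ with $r_i = 0$ contribute no variable $x_{i,j}$ and hence, by Remark \ref{rmk}, no vertex to $\Gamma(n,\br)$. What remains is exactly the complete graph on the $|\supp \br|$ vertices indexed by $\supp \br$. Combining Theorem \ref{thmA} with the Carr-Devadoss result on complete graphs identifies $\ol{Q}(n,\br)$ with the $|\supp\br|$-th permutahedron.

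For part (b), suppose $|\supp \br| \leq 2$. Now the ``complete graph on $n$ vertices'' part of $\Gamma(n,\br)$ consists of at most two non-trivial vertices, hence is either a single vertex or a single edge; attaching at each such vertex a path of length $r_i - 1$ produces one long path. By Remark \ref{rmk} the total number of vertices of $\Gamma(n,\br)$ equals the total number of variables $x_{i,j}$, namely $\sum_i r_i$, so the path has $\sum_i r_i$ vertices. Applying Theorem \ref{thmA} and the Carr-Devadoss identification for paths then gives an associahedron, which in the convention of the corollary (where the $k$-th associahedron has dimension $k-2$, matching the dimension $\sum_i r_i - 1$ of $\ol{Q}(n,\br)$) is the $(1 + \sum_i r_i)$-th associahedron.

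I do not anticipate a serious obstacle: the argument is essentially a direct translation through Theorem \ref{thmA}. The only points requiring mild care are (i) checking that vertices $v_i$ with $r_i = 0$ genuinely drop out of $\Gamma(n,\br)$, so that ``complete graph on $n$ vertices'' really becomes a complete graph on $|\supp \br|$ vertices, and (ii) reconciling the indexing convention between Carr-Devadoss's $K_n$ (path on $n$ vertices) and the corollary's ``$n$-th associahedron,'' which accounts for the $+1$ shift in $(1 + \sum_i r_i)$.
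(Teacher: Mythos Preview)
Your proposal is correct and matches the paper's approach exactly: the paper presents this as an immediate corollary of Theorem \ref{thmA} together with the Carr--Devadoss identifications, without spelling out any further detail. Your write-up is in fact more careful than the paper's, since you explicitly verify (via Remark \ref{rmk}) that vertices with $r_i=0$ drop out of $\Gamma(n,\br)$ and you handle the indexing shift for the associahedron by dimension-counting.
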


\begin{example} Figure \ref{2deg} shows the simplest two dimensional examples.
\begin{figure}[h]
\includegraphics[height=1.5in]{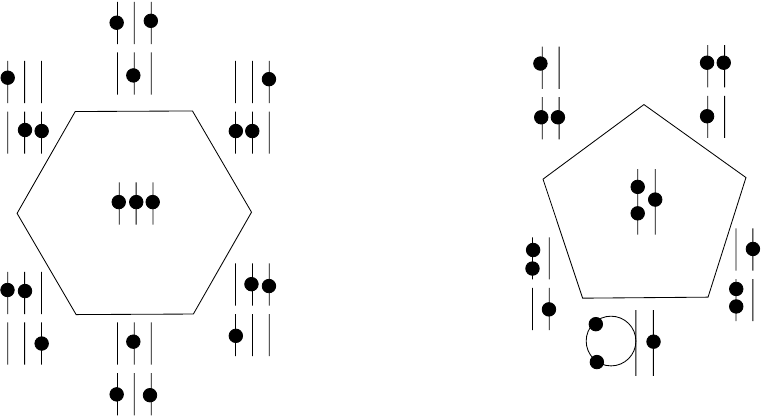}
\caption{$\ol{Q}(3,(1,1,1)$ realizes the permutahedron $P_3$, while $\ol{Q}(2,(2,1))$ realizes the associahedron $K_4$. }\label{2deg}
\end{figure}
\end{example}

\section{\ainfty  $n$-modules} \label{ainfty}

We end by illustrating how the combinatorics of moduli spaces of quilted strips govern the algebraic structure of \ainfty  $n$-modules. Roughly speaking an \ainfty $n$-module is a vector space equipped with multiplication operations by \ainfty algebras $\A_1,\ldots,\A_n$, such that multiplication by elements of the same $\A_i$ is homotopy associative, and multiplication by elements of different \ainfty algebras $\A_i$ and $\A_j$ is homotopy commutative.  

General references for \ainfty modules and bimodules are \cite{lefevre},  \cite{fooo}, \cite{seidel-ainfty}.   We give a simplified definition which does not take into account gradings or signs, but suffices to illustrate the role of the codimension one facets of the graph associahedra.  

 \begin{definition} Let $(\A_1, \ldots, \A_n)$ be an $n$-tuple of \ainfty algebras.   An \ainfty {\em $n$-module} for $(\A_1, \ldots, \A_n)$ is a $\Z_2$-vector space $M$, together with a collection of multilinear maps
\[
\mu^{\br |1} : \A_1^{\otimes r_1}\otimes \ldots \otimes \A_n^{\otimes r_n}\otimes M  \to  M
\]
for $\br = (r_1, \ldots, r_n)  \in \Z_{\geq 0}^n$.  These multilinear maps satisfy the {\em \ainfty $n$-module equations}, which for inputs $(\bfa_1;\ldots;\bfa_n;m)\in  \A_1^{\otimes r_1}\otimes \ldots \otimes \A_n^{\otimes r_n}\otimes M$ look like
\begin{eqnarray}\label{nmod}
0 &= & \sum\limits_{i,j, k} \mu^{\br - (k-1)\mathbf{e}_i |1}(\bfa_1;\ldots; \bfa_i^{1:j}, \mu^k_{\A_j}(\bfa^{j+1:j+k}), \bfa^{j+k+1: r_i}_{i}; \ldots ; \bfa_n; m)\nonumber \\
 && + \sum\limits_{0\leq \mathbf{s} \leq \br} \mu^{\br - \mathbf{s} |1}(\bfa^{s_1+1:r_1}_1; \ldots; \bfa^{s_n +1: r_n}_n; \mu^{\mathbf{s} |1}(\bfa_1^{1:s_1}; \ldots; \bfa^{1:s_n}_n; m)).
\end{eqnarray}
%\marginpar{nmod}
We use bold notation for inputs $\bfa_i = (a_i^1,\ldots,a_i^{r_i}) \in \A_i^{\otimes r_i}$, and for $x\leq y$ we use the shorthand $\bfa_i^{x:y}:=(a_i^{x}, a_i^{x+1},\ldots, a_i^y)$.   We also use the notation $0\leq\bfs \leq \br$ to mean that for each $i$, $0\leq s_i \leq r_i$.  
\end{definition}

The definition for non-unital \ainfty categories $(\A_1, \ldots, \A_n)$ is analogous:  for each tuple $(L_1, \ldots, L_n) \in \Ob \A_1 \times \ldots \times \Ob \A_n$ there is an associated $\Z_2$-vector space $M(L_1,\ldots, L_n)$, and given objects $(L_1^0, \ldots, L_1^{r_1}, \ldots, L_n^0, \ldots, L_n^{r_n})$ in $(\Ob \A_1 )^{r_1+1} \times \ldots \times (\Ob \A_n)^{r_n+1}$ there are multilinear maps
\bea
 \mu^{\br | 1} : & &  \left( \bigotimes_{i=1}^{r_1} \hom(L_1^{i-1}, L_1^i)\right) \otimes   \left( \bigotimes_{i=1}^{r_2} \hom(L_2^{i-1}, L_2^i)\right) \otimes \ldots \\
& & \ldots \left( \bigotimes_{i=1}^{r_n} \hom(L_n^{i-1}, L_n^i) \right) \otimes M(L_1^0, \ldots, L_n^0) \to M(L_1^{r_1}, \ldots, L_n^{r_n}) 
\eea
which for fixed inputs satisfy the \ainfty $n$-module equations \eqref{nmod}.

The connection with the moduli spaces comes from visualizing the maps $\mu^{\br | 1}$ as marked $n$-quilted lines with inputs at the markings on the lines and at the distinguished end $+\infty$, and the output labeling the distinguished end $-\infty$.  The $n$ lines are labeled by $\A_1, \ldots, \A_n$ respectively, with marked points along the $i$-th line labeled by elements of the algebra $\A_i$, and the $+\infty$ end of the quilted line is labeled by an element of the $n$-module $M$.  

\begin{figure}[h]
\center{\input{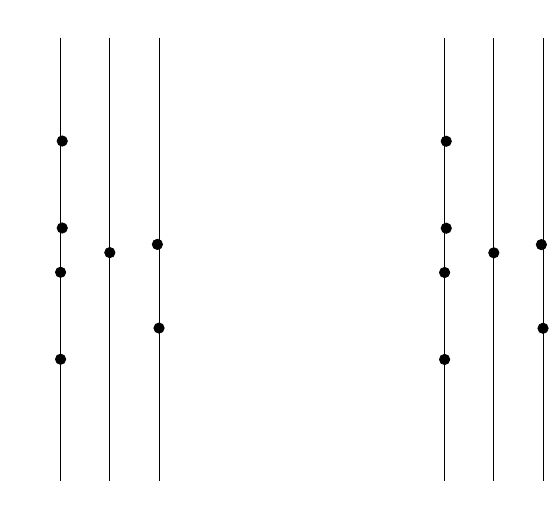_t}
}
\caption{A quilted line in $Q(3, (4,1,2))$ representing the operation $\mu^{4,1,2|1}$ applied to inputs from $\A_1, \A_2, \A_3$ and $M$. }
\end{figure}

The terms appearing on the right hand side of \eqref{nmod} are of two types, stable or unstable.  The unstable terms are those which contain either $\mu^{{\mathbf{0}} | 1}$ or a $\mu_{\A_j}^1$.   All other terms are stable, indexed by the codimension one facets of $\ol{Q}(n,\br)$, in which either a number of markings on the same line can bubble off, or the quilted line can break (see Figure \ref{codim1}).  

\begin{figure}[h]
\center{
\includegraphics[height=1in]{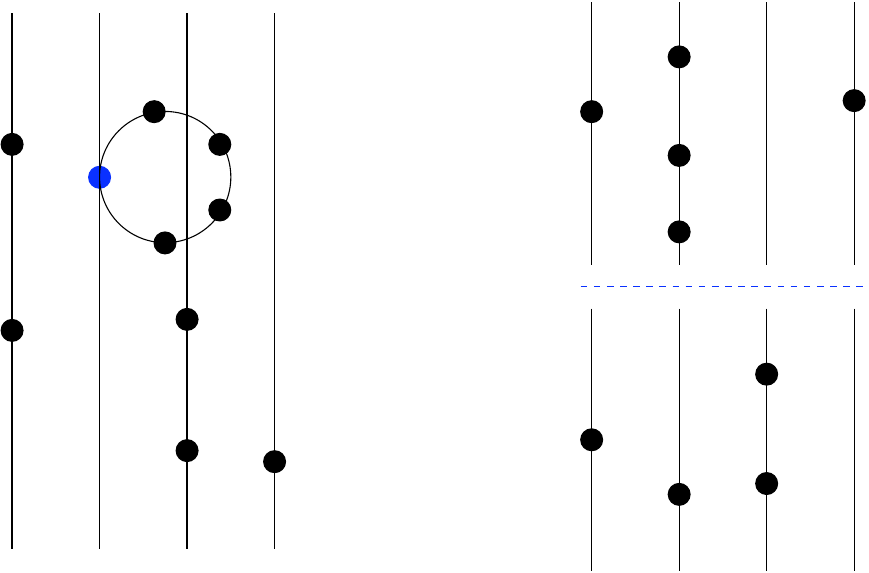}
}
\caption{The two types of codimension one facet.}\label{codim1}
\end{figure}

\subsection{Morphisms}

Let $(\A_1, \ldots, \A_n)$ be \ainfty algebras.  Let $M$ and $N$ be $(\A_1, \ldots, \A_n)$-modules.   A morphism $\theta : M \to N$ is a collection of maps, 
\bea
\theta^{\br | 1}: \A_1^{\otimes r_1} \otimes \ldots \otimes \A_n^{\otimes r_n} \otimes M \to N,
\eea
for $\br = (r_1, \ldots, r_n)$, with $r_i \geq 0$.  The composition of morphisms $\psi: P \to M, \theta: M\to N$,  is defined by summing over all ways of applying $\psi$ followed by $\theta$ to the inputs, and is associative on the nose (just like the composition of morphisms of \ainfty modules or bimodules is associative, see e.g. \cite{seidel-ainfty}).   There is also a boundary operator $\partial$ on morphisms, 
\bea
& & (\partial \theta )^{\br |1}(\bfa_1;\ldots;\bfa_n; m)  := \\
 & & \sum_{i,j,k} \theta^{\br - k\bfe_i | 1} (\bfa_1; \ldots; \bfa_i^{1:j},\mu_{\A_i}^k(\bfa_i^{j+1: j+k}), \bfa_i^{j+k+1: r_i}; \ldots; \bfa_n; m)\\
 & &+ \sum_{\bfs \leq \br}  \theta^{\br - \bfs | 1} (\bfa_1^{s_1+1:r_1}; \ldots; \bfa_n^{s_n+1: r_n}; \mu_{M}^{\bfs| 1}(\bfa_1^{1:s_1}; \ldots; \bfa_n^{1:s_n}; m))\\
& & +\sum_{\bfs \leq \br}\mu_N^{\br-\bfs|1}( \bfa_1^{s_1+1:r_1}; \ldots; \bfa_n^{s_n+1: r_n}; \theta^{\bfs| 1}(\bfa_1^{1:s_1}; \ldots; \bfa_n^{1:s_n}; m)),
\eea
generalizing the boundary operator for morphisms of \ainfty modules or bimodules \cite{seidel-ainfty}.  Write $n-mod(\A_1, \ldots, \A_n)$ for the dg-category whose objects are $(\A_1, \ldots, \A_n)$-modules.  
For \ainfty categories $\A_1,\ldots,\A_n$ the definitions are nearly identical, the only difference being the additional indexing by ordered tuples of objects and their ordered $\hom$ spaces. 
\subsection{Induced functors}
When $\A$ and $\B$ are \ainfty categories, it is well-known that an $(\A,\B)$-bimodule determines an \ainfty functor from $\A$ to $\mbox{mod}\B$.  A completely analogous property holds for $n$-modules.  
We write $(x_1, \ldots, \widehat{x}_i, \ldots, x_{k})$ to denote the $(k-1)$-tuple obtained from $(x_1, \ldots, x_k)$ by deleting the $i$-th term.  
\begin{proposition}
Let $M(\A_1, \ldots, \A_{n+1})$ be an $(n+1)$-module, for some $n\geq 1$ and \ainfty categories $\A_1,\ldots,\A_n$.   Then for each $i = 1, \ldots, n+1$ there is an induced \ainfty functor \[
\F_i : \A_i \to n\mbox{-mod}(\A_1, \ldots, \widehat{\A_i}, \ldots, \A_{n+1}).
\]
\end{proposition}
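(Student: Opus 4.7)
The plan is to construct $\F_i$ directly from the structure maps of $M$, generalizing the bimodule-to-functor construction. On an object $L \in \Ob \A_i$, I would set $\F_i(L)$ to be the $n$-module assigning to each tuple $(L_1,\ldots,\widehat{L_i},\ldots,L_{n+1})$ the vector space $M(L_1,\ldots,L_{i-1}, L, L_{i+1},\ldots, L_{n+1})$, with structure maps given by those $M$-operations with no $\A_i$-inputs:
\[
\mu^{\br'|1}_{\F_i(L)}(\bfa_1;\ldots;\widehat{\bfa_i};\ldots;\bfa_{n+1};m) := \mu^{(r_1,\ldots,0,\ldots,r_{n+1})|1}_M(\bfa_1;\ldots;\emptyset;\ldots;\bfa_{n+1};m).
\]
The $n$-module equations for $\F_i(L)$ are then the specialization of \eqref{nmod} for $M$ to $r_i = 0$. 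For morphisms, given a composable chain $\bfa = (a^1,\ldots,a^k)$ of $\A_i$-arrows between objects $L_0,\ldots,L_k$, I would define the morphism $\F_i^k(\bfa): \F_i(L_0) \to \F_i(L_k)$ by
\[
\F_i^k(\bfa)^{\br'|1}(\bfa_1;\ldots;\widehat{\bfa_i};\ldots;\bfa_{n+1};m) := \mu_M^{(r_1,\ldots,k,\ldots,r_{n+1})|1}(\bfa_1;\ldots;\bfa;\ldots;\bfa_{n+1};m),
\]
with $\bfa$ inserted in the $i$-th slot.

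Since $n\text{-mod}(\A_1,\ldots,\widehat{\A_i},\ldots,\A_{n+1})$ is a dg-category, the \ainfty functor relation to verify for each $\bfa$ is
\[
\partial \F_i^k(\bfa) + \sum_{j,l} \F_i^{k-l+1}\bigl(\ldots, \mu^l_{\A_i}(a^{j+1},\ldots,a^{j+l}),\ldots\bigr) + \sum_{\substack{p+q=k \\ p,q\geq 1}} \F_i^q(a^{p+1},\ldots,a^k) \circ \F_i^p(a^1,\ldots,a^p) = 0.
\]
My plan is to unpack each summand using the defining formulas and show that the resulting collection of terms is precisely \eqref{nmod} evaluated on the tuple $(\bfa_1;\ldots;\bfa;\ldots;\bfa_{n+1};m)$. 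The three sums in $\partial$ (as defined in the excerpt) produce: (i) the terms of \eqref{nmod} in which $\mu^l_{\A_j}$, $j\neq i$, acts on consecutive inputs in its own slot; (ii) the inner-$\mu_M$ terms and (iii) the outer-$\mu_M$ terms, in both of which the accompanying $M$-operation happens to carry no $\A_i$-inputs. The sum $\sum \F_i^{k-l+1}(\ldots)$ contributes exactly those terms of \eqref{nmod} in which $\mu^l_{\A_i}$ acts on a consecutive sub-tuple of $\bfa$. Finally, the composition sum, expanded via the dg-composition in $n\text{-mod}$ (itself a sum over all nestings of one morphism inside the other), produces precisely the terms of \eqref{nmod} in which $\bfa$ itself is split at position $p$ between an inner and outer $M$-operation.

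The main obstacle is the combinatorial bookkeeping: one has to match the splittings of the $\A_j$-inputs ($j\neq i$) coming from the dg-composition with the splittings $0\leq \bfs \leq \br$ of \eqref{nmod}, bijectively and with the correct multiplicities, and check that the two endpoint cases $p=0$, $p=k$ (excluded from the composition sum by the constraint $p,q\geq 1$) are exactly the ones already supplied by the inner- and outer-$\mu_M$ contributions of $\partial \F_i^k(\bfa)$, so that no term is counted twice or omitted. Once that bijection is in place, the functor relation is simply a reorganization of the $(n+1)$-module equation for $M$, and the proposition follows. The analogous statement for arbitrary \ainfty categories $\A_j$ is identical once one tracks the additional indexing by ordered tuples of objects.
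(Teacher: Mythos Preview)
Your construction and verification strategy are correct and coincide exactly with the paper's approach: define $\F_i$ on objects by inserting $L$ in the $i$-th slot, on morphisms by inserting the tuple $\bfa$ in the $i$-th slot, and then observe that the $(n+1)$-module relations for $M$ unpack into the \ainfty functor relations for $\F_i$. In fact the paper's proof simply writes down the same formulas and asserts the last step is ``straightforward to check''; your breakdown of which terms of \eqref{nmod} are accounted for by $\partial\F_i^k(\bfa)$, by the $\mu_{\A_i}$-contraction sum, and by the composition sum (with the endpoint cases $s_i=0$ and $s_i=k$ handled by the second and third sums of $\partial$) is exactly the bookkeeping the paper leaves implicit.
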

 
\begin{proof}
Explicitly, on objects,  $\F_i(A)$ for $A \in \A_i$ is the $n$-module obtained from $M$ by inserting the object $A$ in the $i$-th slot.   On morphisms, the functor consists of multilinear maps
\bea
\F_i^k : \hom_{\A_i}(A_0,A_1) \otimes \ldots \otimes \hom_{\A_i}(A_{k-1}, A_k) \to \hom_{n-mod}(\F_i(A_0), \F_i(A_k)),
\eea
where $\mathbf{a} = (a_1, \ldots, a_k)$ is mapped to the morphism of $n$-modules given by the multilinear maps \bea
\theta^{r_1, \ldots, \widehat{r}_i, \ldots, r_{n+1}  | 1} (\ldots) := \mu^{r_1,\ldots,  k, \ldots, r_n | 1}(\ldots ; \bfa; \ldots),   
 \eea
 where the $\mu^{\br|1}$ are the $(n+1)$-module maps of $M$.  It is straightforward to check that the \ainfty $(n+1)$-module equations for $M$ translate into the \ainfty functor relations for $\F_i$.  
\end{proof}

\bibliographystyle{plain}
%\bibliography{myrefs}
\def\cprime{$'$}

\end{document}